\documentclass[10pt,british,reqno]{amsart}
\usepackage[T1]{fontenc}
\usepackage[latin9]{inputenc}
\usepackage{geometry}
\geometry{verbose,lmargin=1.5cm,rmargin=1.5cm,footskip=2cm}
\pagestyle{empty}
\usepackage{babel}
\usepackage{float}
\usepackage{amsthm}
\usepackage{amssymb}
\usepackage[unicode=true,
 bookmarks=false,
 breaklinks=false,pdfborder={0 0 1},backref=false,colorlinks=false]
 {hyperref}
\hypersetup{
 colorlinks,citecolor=red,urlcolor=green,linkcolor=blue}
\usepackage{breakurl}

\makeatletter
  \theoremstyle{plain}
  \newtheorem{prop}{\protect\propositionname}
  \theoremstyle{remark}
  \newtheorem{rem}{\protect\remarkname}
  \theoremstyle{plain}
  \newtheorem{cor}{\protect\corollaryname}
  \theoremstyle{plain}
  \newtheorem{lem}{\protect\lemmaname}
\theoremstyle{plain}
\newtheorem{thm}{\protect\theoremname}

\usepackage{url}

\makeatother

  \providecommand{\lemmaname}{Lemma}
  \providecommand{\propositionname}{Proposition}
  \providecommand{\remarkname}{Remark}
\providecommand{\corollaryname}{Corollary}
\providecommand{\theoremname}{Theorem}

\begin{document}

\title{The spectrum of diagonal perturbation of weighted shift operator}

\author{M. L. Sahari}

\address{M.L. Sahari: LANOS Laboratory, Department of Mathematics, Badji Mokhtar-Annaba
University, P. O. Box 12, 23000 Annaba, Algeria}

\email{mohamed-lamine.sahari@univ-annaba.dz; mlsahari@yahoo.fr}

\author{A. K. Taha}

\address{A.K. Taha: INSA, University of Toulouse, 135 Avenue de Rangueil,
31077 Toulouse Cedex 4, France }

\email{taha@insa-toulouse.fr}

\author{L. Randriamihamison}

\address{L. Randriamihamison: IPST-Cnam, Institut National Polytechnique de
Toulouse, University of Toulouse, 118, route de Narbonne, 31062 Toulouse
Cedex 9, France}

\email{louis.randriamihamison@ipst.fr}
\begin{abstract}
This paper provides a description of the spectrum of diagonal perturbation
of weighted shift operator acting on a separable Hilbert space.
\end{abstract}

\keywords{{\small{}Spectrum, Perturbed operator, Weighted shift operator}}

\subjclass[2000]{{\small{}Primary 47B37, Secondary 47A10, 47A55.}}
\maketitle

\section{Introduction}

Let $X$ be a separable complex Hilbert space with an orthonormal
basis $\left\{ e_{i}\right\} _{i\in\mathbb{Z}}\subset X$. We define
the weighted shift operator in $X$ by 
\[
Se_{i}=w_{i}e_{i+1},\,\,\,i=0,\pm1,\pm2,...
\]
The sequence $\left\{ w_{i}\right\} _{i\in\mathbb{Z}}\subset\mathbb{C}$
represents the weights of the operator $S$. The matrix of such operator
can be written as
\[
\small S=\left(\begin{array}{ccccccc}
\ddots & \ddots & \vdots & \vdots & \vdots & \vdots & \vdots\\
\ddots & 0 & 0 & 0 & 0 & 0 & \cdots\\
\ddots & w_{-1} & 0 & 0 & 0 & 0 & \cdots\\
\cdots & 0 & w_{0} & 0 & 0 & 0 & \cdots\\
\cdots & 0 & 0 & w_{1} & 0 & 0 & \cdots\\
\cdots & 0 & 0 & 0 & w_{2} & 0 & \cdots\\
\vdots & \vdots & \vdots & \vdots & \ddots & \ddots & \ddots
\end{array}\right).
\]
In \cite{bourhim2006spectrum,conway2013course,shields1974weighted},
it is shown that if $S$ is bounded, then there exists $0\leq r^{-}\leq r^{+}$
such that the spectrum $\sigma(S)$ of $S$ is given by 
\[
\sigma(S)=\left\{ \lambda\in\mathbb{C}\,:\,r^{-}\leq\left|\lambda\right|\leq r^{+}\right\} .
\]
In this work, we propose to extend this type of result to the case
of the perturbed operator $S+D$, where $D$ is a diagonal operator.

\section{Preliminary notions}

Let $\mathcal{L}(X)$ denote the algebra of all bounded linear operators
acting on a complex Banach space $X$. The norm on $X$ and the associated
operator norm on $\mathcal{L}(X)$ are both denoted by $\left\Vert \cdot\right\Vert $.
For $T\in\mathcal{L}(X)$, we denote by $\sigma(T)$, $\rho(T)$ and
$r(T)$ the spectrum, the resolvent and the spectral radius of $T$
respectively. Recall that $\sigma(T)$ is a non-empty compact subset
of $\mathbb{C}$, $r(T)\leq\left\Vert T\right\Vert $ and $r(T)=\lim\left\Vert T^{k}\right\Vert ^{\frac{1}{k}}=\inf\left\Vert T^{k}\right\Vert ^{\frac{1}{k}}$.
If $T$ is invertible, the inverse is denoted by $T^{-1}$ and we
have $\sigma(T^{-1})=\left\{ \dfrac{1}{\lambda}\,:\,\lambda\in\sigma(T)\right\} $.
Moreover, $\dfrac{1}{r(T^{-1})}=\inf\left\{ \left|\lambda\right|\,:\,\lambda\in\sigma(T)\right\} $
(see \cite{conway2013course,dowson1978spectral,halmos2012hilbert,hislop2012introduction,kato2013perturbation}).

\section{Some properties of weighted shift}

In the following, $X$ is a separable complex Hilbert space and $\left\{ e_{i}\right\} _{i\in\mathbb{Z}}$
an orthonormal basis of $X$. Let $S$ be a weighted shift operator
on $X$ with a weight sequence $\left\{ w_{i}\right\} _{i\in\mathbb{Z}}$.
The boundedness of the operator $S$ is a consequence of the boundedness
of the weight sequence $\left\{ w_{i}\right\} _{i\in\mathbb{Z}}$.
However, we have a more general results 
\begin{prop}[\cite{conway2013course,halmos2012hilbert,shields1974weighted}]
\label{bornetude}The operator $S$ is bounded if and only if the
weight sequence $\left\{ w_{i}\right\} _{i\in\mathbb{Z}}$ is bounded.
In this case, 
\[
\left\Vert S^{k}\right\Vert =\sup_{i\in\mathbb{Z}}\left|\prod_{m=0}^{k-1}w_{i+m}\right|,\,\,\,k=1,2,\cdots
\]
\end{prop}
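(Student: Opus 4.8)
The plan is to reduce everything to the action of $S^{k}$ on the basis vectors, where the orthonormality of $\left\{ e_{i}\right\}$ causes the norm to decouple into a sum of squares. The first step is to establish, by a short induction on $k$, the formula
\[
S^{k}e_{i}=\left(\prod_{m=0}^{k-1}w_{i+m}\right)e_{i+k},\qquad i\in\mathbb{Z}.
\]
The base case $k=1$ is the definition of $S$, and the inductive step follows by applying $S$ to both sides and using $Se_{i+k}=w_{i+k}e_{i+k+1}$, which absorbs the extra weight into the product. Thus $S^{k}$ is itself a weighted shift by $k$ positions with weights $\prod_{m=0}^{k-1}w_{i+m}$.

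For the norm formula, I would take an arbitrary $x=\sum_{i}x_{i}e_{i}\in X$ and compute, using the identity above together with the orthonormality of $\left\{ e_{i}\right\}$,
\[
\left\Vert S^{k}x\right\Vert ^{2}=\sum_{i\in\mathbb{Z}}\left|x_{i}\right|^{2}\left|\prod_{m=0}^{k-1}w_{i+m}\right|^{2},
\]
the cross terms vanishing because the images $e_{i+k}$ are mutually orthogonal for distinct $i$. Bounding every factor $\left|\prod_{m=0}^{k-1}w_{i+m}\right|^{2}$ by its supremum and pulling it out of the sum yields $\left\Vert S^{k}x\right\Vert \leq\left(\sup_{i}\left|\prod_{m=0}^{k-1}w_{i+m}\right|\right)\left\Vert x\right\Vert$, hence the upper bound for $\left\Vert S^{k}\right\Vert$. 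The reverse inequality is obtained by testing on the unit vectors $x=e_{i}$, for which $\left\Vert S^{k}e_{i}\right\Vert =\left|\prod_{m=0}^{k-1}w_{i+m}\right|$; taking the supremum over $i$ then forces equality.

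The boundedness equivalence is then simply the case $k=1$. If $\left\{ w_{i}\right\}$ is bounded, the norm formula gives $\left\Vert S\right\Vert =\sup_{i}\left|w_{i}\right|<\infty$; conversely, if $S$ is bounded, then $\left|w_{i}\right|=\left\Vert Se_{i}\right\Vert \leq\left\Vert S\right\Vert$ for every $i$, so the weights are bounded. The only point requiring care is that when $S$ is unbounded the powers $S^{k}$ need not be defined on all of $X$, so the computation above should first be carried out on the dense subspace of finitely supported vectors, where every sum is finite and no convergence question arises; the norm identity then extends to all of $X$ by the standard density argument once boundedness is known. I expect this bookkeeping---keeping the purely algebraic identity separate from the functional-analytic boundedness claim---to be the only mildly delicate part of the argument.
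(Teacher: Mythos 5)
The paper gives no proof of this proposition; it is quoted from the cited references (Conway, Halmos, Shields). Your argument is the standard one found there and is correct: the identity $S^{k}e_{i}=\bigl(\prod_{m=0}^{k-1}w_{i+m}\bigr)e_{i+k}$ plus orthonormality makes $\Vert S^{k}x\Vert^{2}$ decouple as a weighted $\ell^{2}$ sum, giving the supremum as both an upper bound and (by testing on basis vectors) a lower bound for $\Vert S^{k}\Vert$, with the $k=1$ case yielding the boundedness equivalence. Your remark about first working on the dense subspace of finitely supported vectors when $S$ is a priori unbounded is exactly the right bookkeeping.
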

\begin{rem}
If $S$ is invertible, then the inverse $S^{-1}$ is given by 
\[
S^{-1}e_{i}=\dfrac{1}{w_{i-1}}e_{i-1}
\]
and in this case 
\[
\begin{aligned}\left\Vert S^{-k}\right\Vert  & =\sup_{i\in\mathbb{Z}}\left|\prod_{m=0}^{k-1}\dfrac{1}{w_{i-m}}\right|=\sup_{i\in\mathbb{Z}}\left|\prod_{m=0}^{k-1}\dfrac{1}{w_{i+m}}\right|=\left[\inf_{i\in\mathbb{Z}}\left|\prod_{m=0}^{k-1}w_{i+m}\right|\right]^{-1},\,\,\,k=1,2,\cdots\end{aligned}
.
\]
\end{rem}
\begin{prop}
The operator $S$ is invertible if and only if the sequence $\left\{ \dfrac{1}{w_{i}}\right\} _{i\in\mathbb{Z}}$
is bounded. 
\end{prop}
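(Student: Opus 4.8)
The plan is to prove both implications of the equivalence, working under the standing assumption of this section that $S$ is a bounded operator in $\mathcal{L}(X)$ (equivalently, by Proposition~\ref{bornetude}, that $\left\{w_i\right\}_{i\in\mathbb{Z}}$ is bounded). The guiding observation is that the only possible candidate for $S^{-1}$ is the backward weighted shift $T$ determined by $Te_i=\frac{1}{w_{i-1}}e_{i-1}$, as already recorded in the Remark, and that the boundedness of $T$ is governed by the sequence $\left\{1/w_i\right\}$ in exactly the way the boundedness of $S$ is governed by $\left\{w_i\right\}$.

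For the necessity (if $S$ is invertible then $\left\{1/w_i\right\}$ is bounded), I would use that an invertible operator is bounded below: from $\left\|x\right\|=\left\|S^{-1}Sx\right\|\le\left\|S^{-1}\right\|\left\|Sx\right\|$ there is a constant $c>0$ with $\left\|Sx\right\|\ge c\left\|x\right\|$ for every $x\in X$. Testing this on the basis vectors gives $\left\|Se_i\right\|=\left|w_i\right|\ge c$ for all $i$, hence each $w_i\ne0$ and $\left|1/w_i\right|\le 1/c$, so $\left\{1/w_i\right\}$ is bounded. Alternatively one may invoke the Remark directly, since $S^{-1}e_{i+1}=\frac{1}{w_i}e_i$ forces $\left|1/w_i\right|=\left\|S^{-1}e_{i+1}\right\|\le\left\|S^{-1}\right\|$.

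For the sufficiency, suppose $\left\{1/w_i\right\}$ is bounded, say $\left|1/w_i\right|\le M$ for all $i$; in particular every $w_i$ is nonzero, so the definition makes sense. I would then introduce the operator $T$ given on the basis by $Te_i=\frac{1}{w_{i-1}}e_{i-1}$ and extended by linearity. Since $T$ is a backward weighted shift with weight sequence $\left\{1/w_{i-1}\right\}$ and since the images $e_{i-1}$ are orthonormal, the same computation that underlies Proposition~\ref{bornetude} shows that $T$ is bounded with $\left\|T\right\|=\sup_i\left|1/w_{i-1}\right|\le M$. A direct check on basis vectors then yields $STe_i=e_i$ and $TSe_i=e_i$ for every $i$, whence $ST=TS=I$ by boundedness and the density of finite linear combinations of the $e_i$. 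Thus $S$ is invertible with $S^{-1}=T$.

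The only genuine subtlety, which I would flag explicitly, is the role of the boundedness hypothesis on $S$ itself. The sequence $\left\{1/w_i\right\}$ can be bounded while $\left\{w_i\right\}$ is unbounded (take $\left|w_i\right|\to\infty$), so boundedness of $\left\{1/w_i\right\}$ alone does not place $S$ in $\mathcal{L}(X)$; the equivalence must be read within the class of bounded weighted shifts fixed at the start of the section. Once that is granted, both directions are short, and the construction of the explicit two-sided inverse is what carries the argument.
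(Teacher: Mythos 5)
Your proof is correct: the paper states this proposition without providing any proof, and your argument (invertibility implies $S$ is bounded below on the basis vectors, giving $\left|w_i\right|\geq c$; conversely the backward shift $Te_i=\frac{1}{w_{i-1}}e_{i-1}$ is bounded when $\left\{1/w_i\right\}$ is and is checked directly to be a two-sided inverse) is exactly the standard route, consistent with the formula for $S^{-1}$ recorded in the Remark immediately preceding the proposition. Your closing caveat that the equivalence must be read within the class of bounded weighted shifts is a worthwhile precision that the paper leaves implicit.
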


\begin{prop}[\cite{conway2013course,halmos2012hilbert,shields1974weighted}]
\label{prop_unit_equ}If $\{\lambda_{i}\}_{i\in\mathbb{Z}}$ are
complex numbers of modulus 1, then $S$ is unitary equivalent to the
weighted shift operator with weight sequence $\{\overline{\lambda}_{i+1}\lambda_{i}w_{i}\}.$
\end{prop}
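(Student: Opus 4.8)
The plan is to exhibit an explicit diagonal unitary operator that conjugates $S$ into the desired weighted shift. Since the proposed new weights $\overline{\lambda}_{i+1}\lambda_{i}w_{i}$ differ from the original weights $w_{i}$ only by unimodular factors indexed by the two consecutive basis vectors involved in the shift, the natural candidate is the diagonal operator $U$ that multiplies each basis vector $e_{i}$ by $\lambda_{i}$.

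First I would define $U\in\mathcal{L}(X)$ on the basis by $Ue_{i}=\lambda_{i}e_{i}$ for all $i\in\mathbb{Z}$ and extend by linearity and continuity. Because $\left|\lambda_{i}\right|=1$ for every $i$, the operator $U$ preserves norms on the basis and maps the orthonormal basis $\{e_{i}\}_{i\in\mathbb{Z}}$ onto the orthonormal family $\{\lambda_{i}e_{i}\}_{i\in\mathbb{Z}}$, which is again an orthonormal basis of $X$. Hence $U$ is unitary, with adjoint acting by $U^{*}e_{i}=\overline{\lambda}_{i}e_{i}$, and a direct computation confirms $U^{*}U=UU^{*}=I$.

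Next I would compute the action of $U^{*}SU$ on each basis vector. Applying the three operators in succession gives
\[
U^{*}SUe_{i}=U^{*}S(\lambda_{i}e_{i})=\lambda_{i}w_{i}\,U^{*}e_{i+1}=\overline{\lambda}_{i+1}\lambda_{i}w_{i}\,e_{i+1},
\]
which is exactly the defining relation of the weighted shift with weight sequence $\{\overline{\lambda}_{i+1}\lambda_{i}w_{i}\}$. This identifies $U^{*}SU$ with the claimed operator, and since $U$ is unitary, $S$ is unitarily equivalent to it.

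I do not expect a genuine obstacle here: the argument is a short direct verification. The only points deserving (routine) care are confirming that the diagonal multiplier $U$ is unitary on all of $X$, not merely on the linear span of the $e_{i}$, which follows from the uniform bound $\left|\lambda_{i}\right|=1$, and tracking the index shift in the conjugation so that the adjoint contributes the factor $\overline{\lambda}_{i+1}$ rather than $\overline{\lambda}_{i}$.
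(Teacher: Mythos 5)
Your proof is correct and is exactly the standard argument (the paper itself cites this result to Conway, Halmos and Shields without proof, and the cited sources use the same diagonal unitary $Ue_{i}=\lambda_{i}e_{i}$ and the computation $U^{*}SUe_{i}=\overline{\lambda}_{i+1}\lambda_{i}w_{i}e_{i+1}$). Nothing is missing.
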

\begin{cor}
\label{unit_equiv}The operator $S$ is unitary equivalent to the
weighted shift operator of weight $\left\{ \left|w_{i}\right|\right\} _{i\in\mathbb{Z}}$
.
\end{cor}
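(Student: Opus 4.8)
The plan is to derive the corollary directly from Proposition~\ref{prop_unit_equ} by choosing the unimodular multipliers $\{\lambda_{i}\}_{i\in\mathbb{Z}}$ so that the transformed weights $\overline{\lambda}_{i+1}\lambda_{i}w_{i}$ reduce to $\left|w_{i}\right|$. To this end I first record a polar decomposition of the weights: set $u_{i}=w_{i}/\left|w_{i}\right|$ when $w_{i}\neq0$ and $u_{i}=1$ when $w_{i}=0$, so that $\left|u_{i}\right|=1$ and $w_{i}=\left|w_{i}\right|u_{i}$ for every $i$. The desired identity $\overline{\lambda}_{i+1}\lambda_{i}w_{i}=\left|w_{i}\right|$ then becomes, at the indices where $w_{i}\neq0$, the relation $\overline{\lambda}_{i+1}\lambda_{i}=\overline{u}_{i}$, which (using $\left|\lambda_{i+1}\right|=1$) is nothing but the recurrence $\lambda_{i+1}=u_{i}\lambda_{i}$ among numbers of modulus $1$.

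Next I would construct such a sequence explicitly. Fixing $\lambda_{0}=1$, I propagate the recurrence in both directions over $\mathbb{Z}$: for $i\geq0$ put $\lambda_{i+1}=u_{i}\lambda_{i}$, and for $i<0$ put $\lambda_{i}=\overline{u}_{i}\lambda_{i+1}$. Each $\lambda_{i}$ is thus a finite product of the unimodular factors $u_{j}$ or $\overline{u}_{j}$, so the sequence is well defined on all of $\mathbb{Z}$ with no convergence question arising, and $\left|\lambda_{i}\right|=1$ for every $i$ by an immediate induction.

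Finally I would substitute back. By construction $\overline{\lambda}_{i+1}\lambda_{i}=\overline{u}_{i}$ at every index, so $\overline{\lambda}_{i+1}\lambda_{i}w_{i}=\overline{u}_{i}\left|w_{i}\right|u_{i}=\left|w_{i}\right|$ when $w_{i}\neq0$, while $\overline{\lambda}_{i+1}\lambda_{i}w_{i}=0=\left|w_{i}\right|$ when $w_{i}=0$. Applying Proposition~\ref{prop_unit_equ} to the sequence $\{\lambda_{i}\}$ just built then shows that $S$ is unitarily equivalent to the weighted shift with weight sequence $\{\left|w_{i}\right|\}_{i\in\mathbb{Z}}$, which is exactly the assertion.

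The argument is essentially bookkeeping, and the only point needing attention is the handling of the vanishing weights, where the phase $u_{i}$ is not canonically determined. Since $\overline{\lambda}_{i+1}\lambda_{i}w_{i}=\left|w_{i}\right|$ holds trivially once $w_{i}=0$, the convention $u_{i}=1$ there is harmless and keeps the recurrence well defined; I therefore expect no genuine obstacle beyond this.
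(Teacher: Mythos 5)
Your proof is correct and follows exactly the route the paper intends: the corollary is stated as an immediate consequence of Proposition~\ref{prop_unit_equ}, obtained by solving the recurrence $\lambda_{i+1}=u_{i}\lambda_{i}$ for the phases, which is precisely your construction. The handling of vanishing weights via the convention $u_{i}=1$ is the right (and only) point of care, and you dealt with it properly.
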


\begin{cor}[\cite{bourhim2006spectrum,halmos2012hilbert}]
If $\left|c\right|=1$, then $S$ and $cS$ are unitary equivalent.
\end{cor}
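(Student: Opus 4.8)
The plan is to realize $cS$ as a weighted shift and then match it to $S$ through Proposition~\ref{prop_unit_equ}. First I would note that $(cS)e_i = c\,w_i\,e_{i+1}$, so $cS$ is precisely the weighted shift operator whose weight sequence is $\{c\,w_i\}_{i\in\mathbb{Z}}$. Thus it suffices to exhibit a sequence $\{\lambda_i\}_{i\in\mathbb{Z}}$ of unimodular complex numbers for which the transformed weights $\overline{\lambda}_{i+1}\lambda_i w_i$ coincide with $c\,w_i$ for every $i$; equivalently, I need $\overline{\lambda}_{i+1}\lambda_i = c$ for all $i$.

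The key step is to guess the correct sequence. Looking for a geometric solution $\lambda_i = \mu^{i}$ and using $|\mu| = 1$ (so that $\overline{\mu} = \mu^{-1}$), one computes $\overline{\lambda}_{i+1}\lambda_i = \overline{\mu}^{\,i+1}\mu^{i} = \mu^{-1}$, which is constant in $i$. Setting $\mu^{-1} = c$, i.e. $\mu = \overline{c}$, I would take $\lambda_i = \overline{c}^{\,i}$. Since $|c| = 1$ we have $|\lambda_i| = 1$ for all $i$, and a direct substitution gives $\overline{\lambda}_{i+1}\lambda_i = c^{i+1}\overline{c}^{\,i} = c$, hence $\overline{\lambda}_{i+1}\lambda_i w_i = c\,w_i$.

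Then I would invoke Proposition~\ref{prop_unit_equ} with this choice of $\{\lambda_i\}$: it asserts that $S$ is unitarily equivalent to the weighted shift with weight sequence $\{\overline{\lambda}_{i+1}\lambda_i w_i\} = \{c\,w_i\}$, which is exactly $cS$. This completes the argument. I do not anticipate any genuine obstacle here—the entire content lies in the observation that a geometric phase sequence $\lambda_i = \overline{c}^{\,i}$ produces a constant telescoped factor $\overline{\lambda}_{i+1}\lambda_i$. If one prefers to bypass Proposition~\ref{prop_unit_equ} altogether, the same conclusion follows by checking directly that the diagonal unitary $U e_i = \lambda_i e_i$ satisfies $U S U^{*} = cS$.
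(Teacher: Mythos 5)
Your proof is correct and follows exactly the route the paper intends: the corollary is stated as a consequence of Proposition~\ref{prop_unit_equ}, and your choice $\lambda_i=\overline{c}^{\,i}$, giving $\overline{\lambda}_{i+1}\lambda_i=c^{i+1}\overline{c}^{\,i}=c$, is the standard (and essentially only) way to instantiate it. The paper offers no written proof, so nothing further needs comparing.
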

\begin{rem}
From the last corollary, the spectrum of the operator $S$ have circular
symmetry about the origin. In particular, $\left\{ \lambda\in\mathbb{C}\,:\,\left|\lambda\right|=r(S)\right\} \subset\sigma(S)\,\,$
and $\,\,\left\{ \lambda\in\mathbb{C}\,:\,\left|\lambda\right|=\dfrac{1}{r(S^{-1})}\right\} \subset\sigma(S^{-1})$. 

In the following section, we state our main result.
\end{rem}

\section{The spectrum of perturbed weighted shift}

Let $T\in\mathcal{L}(X)$ be a perturbed operator given by 
\begin{equation}
T=S+D,\label{6*}
\end{equation}
where $D$ is a diagonal operator with diagonals $\left\{ d_{i}\right\} _{i\in\mathbb{Z}}$. 

\begin{lem}
\label{lemmaI}If $T$ is invertible, then we have at least one of
the following two inequalities 
\begin{equation}
R_{S+D}^{+}=\lim_{k\rightarrow\infty}\left[\sup_{i\in\mathbb{Z}}\left|\dfrac{\prod\limits _{m=0}^{k-1}w_{i+m}}{\prod\limits _{m=0}^{k}d_{i+m}}\right|\right]^{\frac{1}{k}}\leq1\label{limite1}
\end{equation}
or 
\begin{equation}
R_{S+D}^{-}=\lim_{k\rightarrow\infty}\left[\sup_{i\in\mathbb{Z}}\left|\dfrac{\prod\limits _{m=1}^{k-1}d_{i-m}}{\prod\limits _{m=1}^{k}w_{i-m}}\right|\right]^{\frac{1}{k}}\leq1.\label{limite2}
\end{equation}
\end{lem}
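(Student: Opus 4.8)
The plan is to reduce the invertibility of $T=S+D$ to a spectral condition on a single auxiliary weighted shift, and then read off the dichotomy directly from the annular shape of that shift's spectrum recalled in the Introduction.

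First I would treat the principal case in which $D$ is invertible, i.e. $\{1/d_i\}_{i\in\mathbb{Z}}$ is bounded. Factor $T=D\,(I+D^{-1}S)$; since $D$ is invertible, $T$ is invertible if and only if $I+D^{-1}S$ is, that is, if and only if $-1\notin\sigma(D^{-1}S)$. The key observation is that $A:=D^{-1}S$ is again a weighted shift, $A e_i = D^{-1}(w_i e_{i+1}) = (w_i/d_{i+1})\,e_{i+1}$, with weights $\widetilde w_i = w_i/d_{i+1}$, so the whole weighted-shift machinery applies to $A$. Next I would identify the two limits $R^{\pm}_{S+D}$ with spectral radii of $A$ and $A^{-1}$. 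By Proposition \ref{bornetude} applied to $A$, $\|A^k\|=\sup_i\big|\prod_{m=0}^{k-1} w_{i+m}\big/\prod_{m=1}^{k} d_{i+m}\big|$, while $A^{-1}=S^{-1}D$ acts by $A^{-1}e_i=(d_i/w_{i-1})e_{i-1}$, giving $\|A^{-k}\|=\sup_i\big|\prod_{m=0}^{k-1} d_{i-m}\big/\prod_{m=1}^{k} w_{i-m}\big|$. Comparing these with the expressions defining $R^{+}_{S+D}$ and $R^{-}_{S+D}$, the only discrepancy is a single factor $1/|d_i|$, which lies uniformly in $[\,1/\|D\|,\ \|D^{-1}\|\,]$; such a bounded factor disappears upon extracting $k$-th roots and letting $k\to\infty$. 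Hence $R^{+}_{S+D}=r(A)$ and $R^{-}_{S+D}=r(A^{-1})$.

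With these identifications the conclusion is immediate. By the Introduction, $\sigma(A)$ is the closed annulus $\{\lambda:r^-\le|\lambda|\le r^+\}$ with outer radius $r^+=r(A)=R^{+}_{S+D}$ and, by the preliminary identity $1/r(A^{-1})=\inf\{|\lambda|:\lambda\in\sigma(A)\}$, inner radius $r^-=1/R^{-}_{S+D}$ (with the convention that $R^{-}_{S+D}=+\infty$, whence $r^-=0$, when $S$ is not invertible and the annulus degenerates to a disk). Invertibility of $T$ means $-1\notin\sigma(A)$, i.e. the point $1=|-1|$ does not lie in $[\,1/R^{-}_{S+D},\,R^{+}_{S+D}\,]$; equivalently $R^{+}_{S+D}<1$ or $R^{-}_{S+D}<1$, which yields the asserted inequalities \eqref{limite1}--\eqref{limite2}.

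The main obstacle is the degenerate situation where $D$ fails to be invertible, for then the factorization is unavailable and $R^{+}_{S+D}=+\infty$. Here I would run the symmetric argument: when $S$ is invertible, write $T=S\,(I+S^{-1}D)$ and use that $S^{-1}D$ is the backward weighted shift $e_i\mapsto(d_i/w_{i-1})e_{i-1}$, whose spectrum is likewise an annulus (for instance via the symmetry $e_i\mapsto e_{-i}$) with outer radius $r(S^{-1}D)=R^{-}_{S+D}$; since $D$ non-invertible forces $0\in\sigma(S^{-1}D)$, the annulus is a disk and $-1\notin\sigma(S^{-1}D)$ gives $R^{-}_{S+D}<1$. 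The genuinely delicate case, and the real crux of the lemma, is when neither $S$ nor $D$ is invertible: one must argue that this is incompatible with $T$ being invertible. The mechanism is a decoupling of the bidiagonal operator at interlaced (near-)vanishing indices of $\{w_i\}$ and $\{d_i\}$, which produces either a nonzero finitely supported element of $\ker T$ or a proper closed range; under the hypothesis this case therefore cannot occur, so one of the two factorizations above is always available and the dichotomy applies.
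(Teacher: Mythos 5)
Your reduction in the two factorizable cases is sound and genuinely different from the paper's argument: when $D$ is invertible, $D^{-1}S$ is a weighted shift with weights $w_i/d_{i+1}$, the single factor $|d_i|^{\pm1}$ separating $\|(D^{-1}S)^{\pm k}\|$ from the expressions in \eqref{limite1}--\eqref{limite2} is uniformly bounded above and below, so it disappears under $k$-th roots, and the annulus description of the spectrum quoted in the Introduction (with $1/r(A^{-1})=\inf\{|\lambda|:\lambda\in\sigma(A)\}$) yields the dichotomy; the symmetric argument with $S^{-1}D$ when $S$ is invertible also works. The paper instead never factors $T$: it solves the bidiagonal recursion $w_{j-1}a^i_{j-1}+d_ja^i_j=\delta_{ij}$ for the matrix entries $a^i_j=\langle T^{-1}e_i,e_j\rangle$, obtains the quotients of products in \eqref{limite1}--\eqref{limite2} as closed forms for $a^i_{i\pm k}$, and bounds them by $\|T^{-1}\|$; that route needs no invertibility hypothesis on $S$ or $D$.

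The gap is your third case, and it is fatal as proposed: the claim that $T$ invertible is incompatible with $S$ and $D$ both non-invertible is false, so no decoupling argument can close it. Take any bounded sequence $d_i\neq0$ with $\inf_i|d_i|=0$ (e.g.\ $d_i=(1+|i|)^{-1}$) and set $w_i=\varepsilon\,d_i d_{i+1}$ with $\varepsilon\|D\|<1$. Neither $S$ nor $D$ is invertible, yet $\sup_i\left|\prod_{m=0}^{k-1}w_{i+m}\big/\prod_{m=0}^{k}d_{i+m}\right|=\varepsilon^{k}\sup_i\prod_{m=1}^{k-1}|d_{i+m}|\leq\varepsilon^{k}\|D\|^{k-1}$, so $R^{+}_{S+D}\leq\varepsilon\|D\|<1$ and $T$ is invertible by Lemma \ref{reciproque} (the inverse is the explicitly convergent series constructed there). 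In this regime neither factorization $T=D(I+D^{-1}S)$ nor $T=S(I+S^{-1}D)$ exists, and the sketched ``decoupling at interlaced near-vanishing indices'' has nothing to act on because there need be no actual zeros, only vanishing infima compensated between $\{w_i\}$ and $\{d_i\}$. This compensated case is exactly what the lemma must cover; repairing your proof essentially forces you back to the paper's direct estimate on the entries of $T^{-1}$, which then makes the two factorization cases redundant.
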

\begin{proof}
Let $T$ be invertible and set $x_{i}=\sum_{j\in\mathbb{Z}}a_{j}^{i}e_{j}=T^{-1}e_{i}$
. Thus, we have 
\begin{equation}
\left\{ \begin{aligned}w_{j-1}a_{j-1}^{i}+d_{j}a_{j}^{i}=1, & \,\,\,\,\mbox{if }j=i,\\
\\
w_{j-1}a_{j-1}^{i}+d_{j}a_{j}^{i}=0, & \,\,\,\,\,\mbox{otherwise}.
\end{aligned}
\right.\label{*}
\end{equation}
and 
\begin{equation}
\begin{cases}
w_{i}a_{j}^{i+1}+d_{i}a_{j}^{i}=1, & \,\,\,\,\mbox{if }j=i,\\
\\
w_{i}a_{j}^{i+1}+d_{i}a_{j}^{i}=0, & \,\,\,\,\,\mbox{otherwise}.
\end{cases}\label{**}
\end{equation}
The first equation of (\ref{*}) and of (\ref{**}) implies for all
$i\in\mathbb{Z}$, 
\begin{equation}
a_{i}^{i}d_{i}=a_{0}^{0}d_{0}\label{diag}
\end{equation}
From (\ref{*}), we get, for all $i\in\mathbb{Z}$ and $k>0$, 
\[
a_{i-k}^{i}=\left\langle T^{-1}e_{i}\,,\,e_{i-k}\right\rangle =(-1)^{k+1}\dfrac{(1-d_{i}a_{i}^{i})\prod\limits _{m=1}^{k-1}d_{i-m}}{\prod\limits _{m=1}^{k}w_{i-m}},
\]
assuming that $\prod_{m=1}^{0}d_{i-m}=1$. Cauchy-Schwarz inequality
gives us 
\begin{equation}
\left|\dfrac{(1-d_{0}a_{0}^{0})\prod\limits _{m=1}^{k-1}d_{i-m}}{\prod\limits _{m=1}^{k}w_{i-m}}\right|\leq\left\Vert T^{-1}\right\Vert ,\,\,\,\mbox{for every \ensuremath{i\in\mathbb{Z}\:}and \ensuremath{k\geq0}. }\label{***}
\end{equation}
Consequently, for all $i\in\mathbb{Z}$ and $k>0$, we have 
\[
a_{i+k}^{i}=\left\langle T^{-1}e_{i}\,,\,e_{i+k}\right\rangle =(-1)^{k}\dfrac{d_{i}a_{i}^{i}\prod\limits _{m=0}^{k-1}w_{i+m}}{\prod\limits _{m=0}^{k}d_{i+m}},
\]
Cauchy-Schwarz inequality provides the inequality 
\begin{equation}
\left|d_{0}a_{0}^{0}\dfrac{\prod\limits _{m=0}^{k-1}w_{i+m}}{\prod\limits _{m=0}^{k}d_{i+m}}\right|\leq\left\Vert T^{-1}\right\Vert ,\,\,\,\mbox{for every \ensuremath{i\in\mathbb{Z\,}}and \ensuremath{k>0}. }\label{****}
\end{equation}
From the first equation of (\ref{*}), for all $i\in\mathbb{Z}$,
either $w_{i-1}a_{i-1}^{i}$ or $d_{i}a_{i}^{i}$ is not zero. Thus,
we can distinguish two cases:

\textbf{1$^{st}$ case:} $d_{i}a_{i}^{i}\neq0$, from (\ref{diag})
and by taking the supremum over $i$ in (\ref{***}), we get 
\begin{equation}
\sup_{i\in\mathbb{Z}}\left|\dfrac{\prod\limits _{m=0}^{k-1}w_{i+m}}{\prod\limits _{m=0}^{k}d_{i+m}}\right|<\infty,\,\,\,\mbox{for every }k>0.\label{inequation1}
\end{equation}

\textbf{2$^{nd}$ case:} $w_{i-1}a_{i-1}^{i}\neq0$, from (\ref{diag})
and by taking the supremum over $i$ in (\ref{****}) , we get 
\begin{equation}
\sup_{i\in\mathbb{Z}}\left|\dfrac{\prod\limits _{m=1}^{k-1}d_{i-m}}{\prod\limits _{m=1}^{k}w_{i-m}}\right|<\infty,\,\,\,\mbox{for every }k>0.\label{inequation2}
\end{equation}
We conclude, by taking the $k$th root and letting $k\longrightarrow\infty$
in (\ref{inequation1}) and (\ref{inequation2}).
\end{proof}

In the folow, we give a converse of the previous lemma.
\begin{lem}
\label{reciproque}If $R_{S+D}^{+}<1$ or $R_{S+D}^{-}<1$ then $T$
is invertible.
\end{lem}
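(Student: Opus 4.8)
The plan is to prove invertibility in each of the two cases by exhibiting $T^{-1}$ as a convergent operator series, mirroring the column formulas found in the proof of Lemma \ref{lemmaI}. In the first case $R_{S+D}^{+}<1$, I would factor $T=S+D=D\left(I+D^{-1}S\right)$ and reduce the problem to inverting each factor. The operator $A:=D^{-1}S$ is again a weighted shift, since $Ae_{i}=\frac{w_{i}}{d_{i+1}}e_{i+1}$; hence Proposition \ref{bornetude} gives $\left\Vert A^{k}\right\Vert =\sup_{i}\left|\prod_{m=0}^{k-1}\frac{w_{i+m}}{d_{i+1+m}}\right|$, and comparing this with the quantity defining $R_{S+D}^{+}$ (the two differ only by a single factor $1/d_{i}$, which disappears after taking $k$-th roots and letting $k\to\infty$) yields $r(A)=R_{S+D}^{+}<1$. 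Since $\left|-1\right|=1>r(A)$ we have $-1\notin\sigma(A)$, so $I+A$ is invertible; concretely the Neumann series $\sum_{k\ge0}(-A)^{k}$ converges because $r(A)<1$, and post-composing with $D^{-1}$ recovers precisely the column formula $a_{i+k}^{i}=(-1)^{k}\frac{\prod_{m=0}^{k-1}w_{i+m}}{\prod_{m=0}^{k}d_{i+m}}$ of Lemma \ref{lemmaI} under the normalization $d_{0}a_{0}^{0}=1$. Thus $T=D(I+A)$ is a product of invertible operators.

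The case $R_{S+D}^{-}<1$ is the mirror image: I would write $T=S\left(I+S^{-1}D\right)$ and observe that $B:=S^{-1}D$ satisfies $Be_{i}=\frac{d_{i}}{w_{i-1}}e_{i-1}$, a backward weighted shift whose powers, by the computation underlying Proposition \ref{bornetude}, give $\left\Vert B^{k}\right\Vert =\sup_{i}\left|\frac{\prod_{m=0}^{k-1}d_{i-m}}{\prod_{m=1}^{k}w_{i-m}}\right|$. Since $\left|d_{i}\right|\le\left\Vert D\right\Vert$, this forces $r(B)=\lim_{k}\left\Vert B^{k}\right\Vert ^{1/k}\le R_{S+D}^{-}<1$, so again $I+B$ is invertible via its Neumann series and $T=S(I+B)$ is invertible. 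Here the series reproduces the other column formula $a_{i-k}^{i}=(-1)^{k+1}\frac{\prod_{m=1}^{k-1}d_{i-m}}{\prod_{m=1}^{k}w_{i-m}}$ of Lemma \ref{lemmaI}, corresponding to the normalization $d_{0}a_{0}^{0}=0$.

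The main obstacle in both cases is the a priori invertibility of the dominant factor, namely $D$ when $R_{S+D}^{+}<1$ and $S$ when $R_{S+D}^{-}<1$, because the factorizations $T=D(I+D^{-1}S)$ and $T=S(I+S^{-1}D)$, together with the identification of $R_{S+D}^{\pm}$ with a genuine spectral radius, all presuppose that $D^{-1}$ (respectively $S^{-1}$) exists as a bounded operator, i.e. that $\inf_{i}\left|d_{i}\right|>0$ (respectively $\inf_{i}\left|w_{i}\right|>0$). I would isolate this as the delicate point: one must extract a uniform lower bound on $\left|d_{i}\right|$ (or on $\left|w_{i}\right|$) from the single hypothesis $R_{S+D}^{\pm}<1$, presumably by exploiting the boundedness of $S$ (or of $D$) together with the finiteness, for small $k$, of the supremum defining $R_{S+D}^{\pm}$. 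Once this lower bound is secured, the remainder is the routine Neumann-series estimate above; without it the candidate $T^{-1}$ need not be bounded, so this is exactly the step I expect to require the most care.
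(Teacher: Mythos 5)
Your strategy is in substance the same as the paper's: the paper writes down the candidate inverse $F=\sum_{l\ge 0}F_{l}$ with $F_{l}e_{i}=a_{i+l}^{i}e_{i+l}$ and the coefficients (\ref{element-1}), and checks $T\circ F=F\circ T=I$ by a telescoping computation; your operator $\left(I+D^{-1}S\right)^{-1}D^{-1}=\sum_{l\ge 0}\left(-D^{-1}S\right)^{l}D^{-1}$ is exactly this $F$, so the factorization $T=D\left(I+D^{-1}S\right)$ plus the Neumann series is a cleaner packaging of the same construction (likewise $T=S\left(I+S^{-1}D\right)$ for the second case). The identification $r(D^{-1}S)=R_{S+D}^{+}$ via Proposition \ref{bornetude} is also fine once $0<\inf_{i}\left|d_{i}\right|\le\sup_{i}\left|d_{i}\right|<\infty$, since the $k$-th level quantities then differ by a factor trapped between $1/\sup_{i}\left|d_{i}\right|$ and $1/\inf_{i}\left|d_{i}\right|$.

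The gap is the one you yourself flag, and it cannot be closed: $R_{S+D}^{+}<1$ does \emph{not} imply $\inf_{i}\left|d_{i}\right|>0$. Take $d_{i}=\frac{1}{2(1+|i|)}$ and $w_{i}=d_{i}d_{i+1}$; then $\prod_{m=0}^{k-1}w_{i+m}\big/\prod_{m=0}^{k}d_{i+m}=\prod_{m=1}^{k-1}d_{i+m}$, whose supremum over $i$ is at most $2^{-(k-1)}$, so $R_{S+D}^{+}\le\frac{1}{2}$, while $D$ is not invertible. Worse, in this example $T=S+D$ is compact (both the weights and the diagonal entries tend to $0$), hence not invertible, so no argument can derive the conclusion from the hypothesis $R_{S+D}^{+}<1$ alone: one needs in addition that $\sup_{i}\left|a_{i+l}^{i}\right|<\infty$ for every fixed $l$ (equivalently, in your formulation, $\inf_{i}\left|d_{i}\right|>0$ in the first case and $\inf_{i}\left|w_{i}\right|>0$ in the second). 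The paper's own proof makes the same silent assumption when it asserts that $R_{S+D}^{+}<1$ forces $F$ to be well defined with $\left\Vert F\right\Vert <\infty$ --- already $\left\Vert F_{0}\right\Vert =\sup_{i}\left|1/d_{i}\right|$ must be finite --- so you have correctly located the weak point of the lemma; but as a proof of the statement as written, your argument (like the paper's) is incomplete precisely at the step you defer, and that step is not merely delicate but false in general.
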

\begin{proof}
Note that for all $k>0$ 
\begin{equation}
\left[\sup_{i\in\mathbb{Z}}\left|\dfrac{\prod\limits _{m=0}^{k}d_{i-m}}{\prod\limits _{m=0}^{k}w_{i-m}}\right|\right]^{-1}=\inf_{i\in\mathbb{Z}}\left|\dfrac{\prod\limits _{m=0}^{k}w_{i-m}}{\prod\limits _{m=0}^{k}d_{i-m}}\right|\leq\sup_{i\in\mathbb{Z}}\left|\dfrac{\prod\limits _{m=0}^{k}w_{i+m}}{\prod\limits _{m=0}^{k}d_{i+m}}\right|,\label{onlyone}
\end{equation}
then only one of inequality $R_{S+D}^{+}<1$ or $R_{S+D}^{-}<1$ can
be satisfied.

Let $R_{S+D}^{+}<1$ and let $F$ be a linear operator on $X$ to
$X$, defined by
\begin{equation}
F:=\sum_{l=0}^{\infty}F_{l},\label{operatorsum}
\end{equation}
such as, for all $l\in\mathbb{N}$, $F_{l}$ is an operator given
by 
\begin{equation}
F_{l}e_{i}=a_{i+l}^{i}e_{i+l},\,\,\,i=0,\pm1,\pm2,...\label{oprerateurdefine}
\end{equation}
and
\begin{equation}
a_{i+l}^{i}=(-1)^{l}\dfrac{\prod\limits _{m=0}^{l-1}w_{i+m}}{\prod\limits _{m=0}^{l}d_{i+m}}\label{element-1}
\end{equation}
with the assumptions that $\prod_{m=0}^{-1}w_{i+m}=1$. The condition
$R_{S+D}^{+}<1$ implies that the operator $F$ is well defined, $\left\Vert F\right\Vert <\infty$
and $\lim_{k\longrightarrow\infty}a_{i+k}^{i+1}=\lim_{k\longrightarrow\infty}a_{i+k}^{i}=0$.
From (\ref{operatorsum})-(\ref{element-1}), and for all $i\in\mathbb{Z}$,
we have
\begin{eqnarray*}
\left(F\circ T\right)e_{i} & = & w_{i}Fe_{i+1}+d_{i}Fe_{i},\\
 & = & d_{i}a_{i}^{i}e_{i}+\lim_{k\longrightarrow\infty}\left\{ \left[\sum_{l=0}^{k}\left(w_{i}a_{i+l+1}^{i+1}+d_{i}a_{i+l+1}^{i}\right)e_{i+l+1}\right]+w_{i}a_{i+k+2}^{i+1}e_{i+k+2}\right\} ,\\
 & = & d_{i}a_{i}^{i}e_{i}+\sum_{l=0}^{\infty}\left(w_{i}a_{i+l+1}^{i+1}+d_{i}a_{i+l+1}^{i}\right)e_{i+l+1}.
\end{eqnarray*}
Equation (\ref{element-1}), leads to $a_{i}^{i}=\dfrac{1}{d_{i}}$
and 
\begin{equation}
w_{i}a_{i+l+1}^{i+1}=(-1)^{l}\dfrac{\prod\limits _{m=0}^{l}w_{i+m}}{\prod\limits _{m=0}^{l}d_{i+m+1}}.\label{A**1-3-1-1}
\end{equation}
Also
\begin{equation}
d_{i}a_{i+l+1}^{i}=-(-1)^{l}\dfrac{\prod\limits _{m=0}^{l}w_{i+m}}{\prod\limits _{m=0}^{l}d_{i+m+1}},\label{A**2-3-1-1}
\end{equation}
hence 
\[
w_{i}a_{i+l+1}^{i+1}+d_{i}a_{i+l+1}^{i}=0,
\]
 which implies $\left(F\circ T\right)e_{i}=e_{i}$ . Moreover, note
that
\begin{eqnarray*}
\left(T\circ F\right)e_{i} & = & T\left(\sum_{l=0}^{\infty}F_{l}e_{i}\right),\\
 & = & a_{i}^{i}d_{i}e_{i}+\lim_{k\longrightarrow\infty}\left\{ \left[\sum_{l=0}^{k}\left(a_{i+l}^{i}w_{i+l}+a_{i+l+1}^{i}d_{i+l+1}\right)e_{i+l+1}\right]+a_{i+k+1}^{i}w_{i+k+1}e_{i+k+1}\right\} ,\\
 & = & a_{i}^{i}d_{i}e_{i}+\sum_{l=0}^{\infty}\left(a_{i+l}^{i}w_{i+l}+a_{i+l+1}^{i}d_{i+l+1}\right)e_{i+l+1}.
\end{eqnarray*}
From (\ref{element-1}), we have $a_{i}^{i}=\dfrac{1}{d_{i}}$, then
\begin{equation}
w_{i+l}a_{i+l}^{i}=(-1)^{l}\dfrac{\prod\limits _{m=0}^{l}w_{i+m}}{\prod\limits _{m=0}^{l}d_{i+m}},\label{A**1-1-2-1-1}
\end{equation}
and
\begin{equation}
d_{i+l+1}a_{i+l+1}^{i}=-(-1)^{l}\dfrac{\prod\limits _{m=0}^{l}w_{i+m}}{\prod\limits _{m=0}^{l}d_{i+m}}.\label{A**2-1-2-1-1}
\end{equation}
Then
\[
w_{i+l}a_{i+l}^{i}+d_{i+l+1}a_{i+l+1}^{i}=0.
\]
 Hence, $\left(T\circ F\right)e_{i}=e_{i}$, which lead to
\[
T\circ F=F\circ T=I,
\]
where $I$ denotes the identity operator.

If $R_{S+D}^{+}<1$, let $F$ be an operator on $X$ to $X$, defined
by
\begin{equation}
F:=\sum_{l=1}^{\infty}F_{-l},\label{operatorsome1}
\end{equation}
and for all $l\in\mathbb{N}-\left\{ 0\right\} $, $F_{-l}$ is an
operator given by 
\begin{equation}
F_{-l}e_{j}=a_{j-l}^{j}e_{j-l},\,\,\,j=0,\pm1,\pm2,...\label{operatordefine1}
\end{equation}
and
\begin{equation}
a_{i-k}^{i}=(-1)^{k+1}\dfrac{\prod\limits _{m=1}^{k-1}d_{i-m}}{\prod\limits _{m=0}^{k}w_{i-m}},\label{element-1-1-1}
\end{equation}
with assumptions that $\prod_{m=1}^{0}d_{i-m}=1$. Note that, the
condition $R_{S+D}^{-}<1$ implies that the operator $F$ is well
defined, $\left\Vert F\right\Vert <\infty$ and $\lim_{k\longrightarrow\infty}a_{i-k}^{i}=0$.
From (\ref{operatorsome1})-(\ref{element-1-1-1}), then for all $i\in\mathbb{Z}$,
\begin{eqnarray*}
\left(F\circ T\right)e_{i} & = & w_{i}Fe_{i+1}+d_{i}Fe_{i},\\
 & = & w_{i}a_{i}^{i+1}e_{i}+\lim_{k\longrightarrow\infty}\left\{ \left[\sum_{l=1}^{n}\left(w_{i}a_{i-l}^{i+1}+d_{i}a_{i-l}^{i}\right)e_{i-l}\right]+d_{i}a_{i-k-1}^{i}e_{i-k-1}\right\} ,\\
 & = & w_{i}a_{i}^{i+1}e_{i}+\sum_{l=1}^{\infty}\left(w_{i}a_{i-l}^{i+1}+d_{i}a_{i-l}^{i}\right)e_{i-l}.
\end{eqnarray*}
Formula (\ref{element-1-1-1}), leads to $a_{i}^{i+1}=\dfrac{1}{w_{i}}$
and 
\begin{equation}
w_{i}a_{i-l}^{i+1}=-(-1)^{l+1}w_{i}\dfrac{\prod\limits _{m=1}^{l}d_{i-m+1}}{\prod\limits _{m=1}^{l+1}w_{i-m+1}}=-(-1)^{l+1}\dfrac{\prod\limits _{m=1}^{l}d_{i-m+1}}{\prod\limits _{m=1}^{l}w_{i-m}}.\label{A**1-2}
\end{equation}
 Also
\begin{equation}
d_{i}a_{i-l}^{i}=(-1)^{l+1}d_{i}\dfrac{\prod\limits _{m=1}^{l-1}d_{i-m}}{\prod\limits _{m=1}^{l}w_{i-m}}=(-1)^{l+1}\dfrac{\prod\limits _{m=1}^{l-1}d_{i-m+1}}{\prod\limits _{m=1}^{l}w_{i-m}}.\label{A**2-2}
\end{equation}
Combining (\ref{A**1-2}) and (\ref{A**2-2}), we obtain $w_{i}a_{i-l}^{i+1}+d_{i}a_{i-l}^{i}=0$.
Therefore $\left(F\circ T\right)e_{i}=e_{i}$. Moreover, note that
\begin{eqnarray*}
\left(T\circ F\right)e_{i} & = & T\left(\sum_{l=1}^{\infty}F_{-l}e_{i}\right),\\
 & = & w_{i-1}a_{i-1}^{i}e_{i}+\lim_{k\longrightarrow\infty}\left\{ \left[\sum_{l=1}^{k}\left(a_{i-l-1}^{i}w_{i-l-1}+a_{i-l}^{i}d_{i-l}\right)e_{i-l}\right]+a_{i-k-1}^{i}d_{i-k-1}e_{i-k-1}\right\} ,\\
 & = & w_{i-1}a_{i-1}^{i}e_{i}+\sum_{l=1}^{\infty}\left(a_{i-l-1}^{i}w_{i-l-1}+a_{i-l}^{i}d_{i-l}\right)e_{i-l},
\end{eqnarray*}
using (\ref{element-1-1-1}), we obtain $a_{i-1}^{i}=\dfrac{1}{w_{i-1}}$
and
\begin{equation}
w_{i-l-1}a_{i-l-1}^{i}=-(-1)^{l+1}w_{i-l-1}\dfrac{\prod\limits _{m=1}^{l}d_{i-m}}{\prod\limits _{m=1}^{l+1}w_{i-m}}=-(-1)^{l+1}\dfrac{\prod\limits _{m=1}^{l}d_{i-m}}{\prod\limits _{m=1}^{l}w_{i-m}}.\label{A**1-1-1}
\end{equation}
Also
\begin{equation}
d_{i-l}a_{i-l}^{i}=(-1)^{l+1}\dfrac{\prod\limits _{m=1}^{l}d_{i-m}}{\prod\limits _{m=1}^{l}w_{i-m}},\label{A**2-1-1}
\end{equation}
which leads to 
\[
w_{i-l-1}a_{i-l-1}^{i}+d_{i-l}a_{i-l}^{i}=0,
\]
therefore $\left(T\circ F\right)e_{i}=e_{i}$ and we have
\[
T\circ F=F\circ T=I,
\]
then the claim is proved.
\end{proof}

\begin{thm}
\label{thmprincipal}Let $T\in\mathcal{L}(X)$ be the operator given
by (\ref{6*}) and for any $\lambda\in\mathbb{C}$, $R_{S+D}^{+}(\lambda)$
, $R_{S+D}^{-}(\lambda)$ are given by 
\begin{equation}
R_{S+D}^{+}(\lambda)=\lim_{k\rightarrow\infty}\left[\sup_{i\in\mathbb{Z}}\left|\dfrac{\prod\limits _{m=0}^{k-1}w_{i+m}}{\prod\limits _{m=0}^{k}(d_{i+m}-\lambda)}\right|\right]^{\frac{1}{k}}\label{Gamma}
\end{equation}
and 
\begin{equation}
R_{S+D}^{-}(\lambda)=\lim_{k\rightarrow\infty}\left[\sup_{i\in\mathbb{Z}}\left|\dfrac{\prod\limits _{m=0}^{k-1}(d_{i-m}-\lambda)}{\prod\limits _{m=0}^{k}w_{i-m}}\right|\right]^{\frac{1}{k}}.\label{Delta}
\end{equation}
\\
 (i) If $S$ is an invertible operator, then 
\begin{equation}
\sigma(T)=\left\{ \lambda\in\mathbb{C}\,:\,R_{S+D}^{+}(\lambda)\geq1\,\,\,\mbox{and}\,\,\,R_{S+D}^{-}(\lambda)\geq1\right\} ;\label{spectre_inversible}
\end{equation}
(ii) if $S$ is a non-invertible operator, then 
\begin{equation}
\sigma(T)=\left\{ \lambda\in\mathbb{C}\,:\,R_{S+D}^{+}(\lambda)\geq1\right\} ,\label{spectre_non_inversible}
\end{equation}
\end{thm}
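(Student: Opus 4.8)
The plan is to reduce everything to the two preceding lemmas applied to the shifted operator $T-\lambda I$. Since $D-\lambda I$ is again a diagonal operator, with diagonal entries $d_{i}-\lambda$, the operator $T-\lambda I=S+(D-\lambda I)$ is a diagonal perturbation of the \emph{same} weighted shift $S$. Comparing definitions, the quantity $R_{S+D}^{+}(\lambda)$ in (\ref{Gamma}) is literally the number $R_{S+D}^{+}$ of (\ref{limite1}) computed with $d_{i}$ replaced by $d_{i}-\lambda$, and $R_{S+D}^{-}(\lambda)$ in (\ref{Delta}) agrees with $R_{S+D}^{-}$ of (\ref{limite2}) up to a single bounded factor of the form $(d_{i}-\lambda)/w_{i}$ inside the supremum, whose $k$-th root tends to $1$; when $S$ is invertible the weights are bounded below, so this reconciliation is routine and does not affect the limit. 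Thus $R_{S+D}^{\pm}(\lambda)$ are exactly the two radii produced by Lemmas \ref{lemmaI} and \ref{reciproque} for the perturbation $D-\lambda I$, and $\lambda\in\sigma(T)$ iff $T-\lambda I$ fails to be invertible.

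With this dictionary the two inclusions come directly from the lemmas. Applying the contrapositive of Lemma \ref{reciproque} to $T-\lambda I$ gives: if $\lambda\in\sigma(T)$ then neither $R_{S+D}^{+}(\lambda)<1$ nor $R_{S+D}^{-}(\lambda)<1$ holds, i.e. $R_{S+D}^{+}(\lambda)\geq1$ and $R_{S+D}^{-}(\lambda)\geq1$; hence $\sigma(T)\subseteq\{\lambda:R_{S+D}^{+}(\lambda)\geq1\text{ and }R_{S+D}^{-}(\lambda)\geq1\}$. Conversely, the contrapositive of Lemma \ref{lemmaI} states that if $R_{S+D}^{+}(\lambda)>1$ and $R_{S+D}^{-}(\lambda)>1$ then $T-\lambda I$ is not invertible, so the \emph{strict} region $\{\lambda:R_{S+D}^{+}(\lambda)>1\text{ and }R_{S+D}^{-}(\lambda)>1\}$ is contained in $\sigma(T)$. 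This already proves (\ref{spectre_inversible}) away from the boundary locus where one of the two radii equals $1$.

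The remaining, and genuinely hardest, step is to fill in this boundary. The two lemmas leave a gap precisely because Lemma \ref{reciproque} needs a \emph{strict} inequality (to sum the Neumann-type series defining the inverse $F$), while the Cauchy--Schwarz bound in Lemma \ref{lemmaI} only yields the non-strict $R^{\pm}\leq1$. To close it I would use that $\sigma(T)$ is closed, so it contains the closure of the strict region, and then argue that the strict region is dense in $\{\lambda:R_{S+D}^{+}(\lambda)\geq1\text{ and }R_{S+D}^{-}(\lambda)\geq1\}$. For this I would exploit that $\lambda\mapsto R_{S+D}^{\pm}(\lambda)$ is lower semicontinuous, being built from suprema of moduli of the analytic functions $\prod_{m}(d_{i+m}-\lambda)$, and that its value-$1$ level set is thin: if a point $\lambda_{0}$ with, say, $R_{S+D}^{+}(\lambda_{0})=1$ and $R_{S+D}^{-}(\lambda_{0})\geq1$ were to lie in the open set $\rho(T)$, then a whole disc around $\lambda_{0}$ would lie in $\rho(T)$, on which Lemma \ref{lemmaI} forces $R^{+}\leq1$ or $R^{-}\leq1$ pointwise; producing in that disc a single point with both radii strictly above $1$ contradicts $\rho(T)$ being disjoint from the strict region. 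Establishing this local non-degeneracy of the radii is the main obstacle and the place where the finer structure of the products must be used.

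Finally, for the non-invertible case (ii) I would show that the second condition is automatic, so that $\{R^{+}\geq1\text{ and }R^{-}\geq1\}$ collapses to $\{R^{+}\geq1\}$. By the Proposition characterising invertibility, $S$ non-invertible means $\{1/w_{i}\}$ is unbounded, hence $\inf_{i}|w_{i}|=0$ and $\inf_{i}|\prod_{m=0}^{k}w_{i-m}|=0$ for every $k$. This makes the backward denominator in (\ref{Delta}) arbitrarily small along suitable windows, forcing $R_{S+D}^{-}(\lambda)\geq1$ for all $\lambda$; equivalently, inequality (\ref{onlyone}) shows that the alternative $R^{-}<1$ can never occur, so only the forward construction of Lemma \ref{reciproque} can yield invertibility. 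Consequently $\sigma(T)=\{\lambda:R_{S+D}^{+}(\lambda)\geq1\}$, which is (\ref{spectre_non_inversible}); the same boundary argument as in (i) handles the equality $R^{+}=1$. The delicate point specific to (ii) is to verify $R_{S+D}^{-}(\lambda)\geq1$ uniformly in $\lambda$, including $\lambda$ near the diagonal values $d_{i}$, but I expect it to follow the same lines.
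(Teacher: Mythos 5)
Your overall strategy --- apply Lemmas \ref{lemmaI} and \ref{reciproque} to $T-\lambda I$, whose diagonal part has entries $d_{i}-\lambda$, and take contrapositives --- is exactly the paper's, and your inclusion $\sigma(T)\subseteq\{\lambda:R_{S+D}^{+}(\lambda)\geq1\mbox{ and }R_{S+D}^{-}(\lambda)\geq1\}$ via Lemma \ref{reciproque} matches the paper. The problem is the step you yourself flag as the ``main obstacle'': recovering the level set where one of the radii equals $1$. Your plan is to show $\sigma(T)\supseteq\overline{\{\lambda:R_{S+D}^{+}(\lambda)>1\mbox{ and }R_{S+D}^{-}(\lambda)>1\}}$ and then prove the strict region is dense in the non-strict one. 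This cannot work: the strict region can be empty while the non-strict region is the entire spectrum. For the unweighted bilateral shift ($w_{i}\equiv1$, $d_{i}\equiv0$) one computes $R_{S+D}^{+}(\lambda)=1/\left|\lambda\right|$ and $R_{S+D}^{-}(\lambda)=\left|\lambda\right|$, so the strict region is empty whereas $\{R_{S+D}^{+}\geq1\mbox{ and }R_{S+D}^{-}\geq1\}$ is the unit circle, which is indeed $\sigma(S)$. No ``local non-degeneracy'' statement can rescue a density argument in this situation. The paper closes the gap from the other side: for $\lambda$ in the \emph{open} set $\rho(T)$ it upgrades the conclusion of Lemma \ref{lemmaI} from $R^{\pm}(\lambda)\leq1$ to a strict inequality (``the equality is excluded by spectrum compactness''), and the contrapositive then delivers the whole closed region, boundary included. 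That one-sentence upgrade is itself the delicate point of the paper's argument, but it is at least aimed at a true statement, as the example shows; your route is aimed at a false one.

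Second, in case (ii) you argue that $S$ non-invertible forces $R_{S+D}^{-}(\lambda)\geq1$ for every $\lambda$, so that the conjunction collapses to $\{R_{S+D}^{+}\geq1\}$. This fails precisely at the points you call ``delicate'': with $d_{i}\equiv0$ and $S$ a non-invertible weighted shift with all $w_{i}\neq0$, taking $\lambda=0$ makes every numerator in (\ref{Delta}) vanish, so $R_{S+D}^{-}(0)=0$, yet $0\in\sigma(S)$ and $R_{S+D}^{+}(0)=\infty$. Hence $\{R^{+}\geq1\mbox{ and }R^{-}\geq1\}$ is in general strictly smaller than $\{R^{+}\geq1\}$, and (ii) is not obtained from (i) by showing the second condition is automatic. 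The paper instead reruns the dichotomy inside the proof of Lemma \ref{lemmaI}: when $\{1/w_{i}\}$ is unbounded the backward alternative (finiteness of the supremum in (\ref{inequation2})) is unavailable, so for $\lambda\in\rho(T)$ only $R_{S+D}^{+}(\lambda)<1$ can occur, which gives $\{R_{S+D}^{+}\geq1\}\subseteq\sigma(T)$ directly. Your remark about the index shift between (\ref{limite2}) and (\ref{Delta}) is a fair minor point, but the two gaps above mean the proposal as written does not prove the theorem.
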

\begin{proof}
Let $\lambda\in\rho(T)=\left\{ \lambda\in\mathbb{C}\,:\,\left(T-\lambda I\right)^{-1}\in\mathcal{L}(X)\right\} $.
If we replace $d_{j}$ by $d_{j}-\lambda$ in the Lemma \ref{lemmaI},
then we get either $R_{S+D}^{+}(\lambda)\leq1$ or $R_{S+D}^{-}(\lambda)\leq1.$
But the equality is excluded by spectrum compactness. So we have at
least 
\begin{equation}
R_{S+D}^{+}(\lambda)<1\label{ineq1}
\end{equation}
 or 
\begin{equation}
R_{S+D}^{-}(\lambda)<1.\label{ineq2}
\end{equation}
If $S$ is invertible, then from (\ref{onlyone}), only one of inequality
(\ref{ineq1}) and (\ref{ineq2}) can be satisfied. Thus, 
\[
\left\{ \lambda\in\mathbb{C}\,:\,R_{S+D}^{+}(\lambda)\geq1\,\mbox{\,\,\ and}\,\,\,R_{S+D}^{-}(\lambda)\geq1\right\} \subset\sigma(T).
\]
Therefore, if $S$ is non invertible, $\sup_{i\in\mathbb{Z}}\left|\dfrac{\prod\limits _{m=1}^{k-1}(d_{i-m}-\lambda)}{\prod\limits _{m=1}^{k}w_{i-m}}\right|$
is not bounded and we have only $R_{S+D}^{+}(\lambda)<1$. So,
\[
\left\{ \lambda\in\mathbb{C}\,:\,R_{S+D}^{+}(\lambda)\geq1\right\} \subset\sigma(T).
\]
Conversely, in order to show that 
\[
\sigma(T)\subset\left\{ \lambda\in\mathbb{C}\,:\,R_{S+D}^{+}(\lambda)\geq1\,\mbox{and}\,R_{1}^{-}(\lambda)\geq1\right\} ,
\]
we take $\lambda\in\mathbb{C}$, such that 
\begin{equation}
R_{S+D}^{+}(\lambda)<1\label{7*}
\end{equation}
or 
\begin{equation}
R_{S+D}^{-}(\lambda)<1\label{8*}
\end{equation}
and we show that $\lambda\notin\sigma(T)$. From Lemma \ref{reciproque},
$T-\lambda I$ is invertible and there will exist an operator $(T-\lambda I)^{-1}\in\mathcal{L}(X)$
such that 
\begin{equation}
I=(T-\lambda I)^{-1}(T-\lambda I)=(T-\lambda I)(T-\lambda I)^{-1}.\label{9*}
\end{equation}
Therefore, $\lambda\notin\sigma(T)$. Similarly one can show that
if $S$ is not invertible, then 
\[
\sigma(T)\subset\left\{ \lambda\in\mathbb{C}\,:\,R_{S+D}^{+}(\lambda)\geq1\right\} .
\]
\end{proof}
\begin{rem}
In the previous theorem, if we take $d_{i}=0$ for all $i\in\mathbb{Z}$,
then we obtain a result already shown in \cite{bourhim2006spectrum,conway2013course,ridge1970approximate,shields1974weighted}
about the spectrum of the operator $S$. That is 
\[
\sigma(S)=\left\{ \lambda\in\mathbb{C}\,:\,\dfrac{1}{r(S^{-1})}\leq\left|\lambda\right|\leq r(S)\right\} .
\]
\end{rem}

\section{Remark about the spectrum of perturbed weighted $n$-shift}

For a strictly positive integer $n$, we define the weighted $n$-shift
operator in $X$ by 
\[
S_{n}e_{i}=w_{i}e_{i+n},\,\,\,i=0,\pm1,\pm2,...
\]
The sequence $\left\{ w_{i}\right\} _{i\in\mathbb{Z}}\subset\mathbb{C}$
represents the weights of the operator $S_{n}$. It is clear that
the weighted $1$-shift coincide with weighted shift (in the usual
sense, see \cite{stochel1989unbounded}).
\begin{rem}
Let $T_{n}\in\mathcal{L}(X)$ be the operator given by
\begin{equation}
T_{n}=S_{n}+D,\label{pertubed_n-1}
\end{equation}
where $D$ is a diagonal operator defined in (\ref{6*}). For every
$j\in\left\{ 0,\,...,\,n-1\right\} $ and $i\in\mathbb{Z}$, let that
$e_{i}^{j}:=e_{j+in}$, $w_{i}^{j}:=w_{j+in}$ and $S_{n}^{j}e_{i}^{j}=w_{i}^{j}e_{i+1}^{j}.$
Where $S_{n}^{j}$ is the restriction of $S_{n}$ on $X_{j}$, the
$S_{n}-$invariant closed linear subspace spanned by $\left\{ e_{i}^{j}\,:\,i\in\mathbb{Z}\right\} $.
Note that 
\[
X=X_{0}\oplus X_{1}\oplus\cdots\oplus X_{n-1}
\]
and 
\[
S_{n}=S_{n}^{0}\oplus S_{n}^{1}\oplus\cdots\oplus S_{n}^{n-1}
\]
Also, since each $S_{n}^{j}$ is a weighted $1$-shift, then the spectra
of $S_{n}$ is the union of the spectra of all $S_{n}^{j}$, $j=0,\,...,\,n-1$
(see \cite{halmos2012hilbert}). In particular, 
\[
\sigma(S_{n})=\sigma(S_{n}^{0})\cup\sigma(S_{n}^{1})\cup\cdot\cdot\cdot\cup\sigma(S_{n}^{n-1}).
\]
Moreover, if we denote by $D^{j}$ the restriction of $D$ to $X_{j}$,
then
\[
S_{n}+D=\left(S_{n}^{0}+D^{0}\right)\oplus\left(S_{n}^{1}+D^{1}\right)\oplus\cdots\oplus\left(S_{n}^{n-1}+D^{n-1}\right)
\]
and thus
\[
\sigma(S_{n}+D)=\sigma(S_{n}^{0}+D^{0})\cup\sigma(S_{n}^{1}+D^{1})\cup\cdots\cup\sigma(S_{n}^{n-1}+D^{n-1})
\]
 Furthermore, for $j\in\left\{ 0,\,...,\,n-1\right\} $, $R_{j}^{+}(\lambda)$
and $R_{j}^{-}(\lambda)$ are given by 
\begin{equation}
R_{j}^{+}(\lambda)=\lim_{k\rightarrow\infty}\left[\sup_{i\in\mathbb{Z}}\left|\dfrac{\prod\limits _{m=0}^{k-1}w_{j+(i+m)n}}{\prod\limits _{m=0}^{k}(d_{j+(i+m)n}-\lambda)}\right|\right]^{\frac{1}{k}}\label{Gamma-1}
\end{equation}
and 
\begin{equation}
R_{j}^{-}(\lambda)=\lim_{k\rightarrow\infty}\left[\sup_{i\in\mathbb{Z}}\left|\dfrac{\prod\limits _{m=0}^{k-1}(d_{j+(i-m)n}-\lambda)}{\prod\limits _{m=0}^{k}w_{j+(i-m)n}}\right|\right]^{\frac{1}{k}}.\label{Delta-1}
\end{equation}
By Theorem \ref{thmprincipal}, if $S_{n}^{j}$ is invertible operator,
then we have 
\[
\sigma(S_{n}^{j}+D^{j})=\left\{ \lambda\in\mathbb{C}\,:\,R_{j}^{+}(\lambda)\geq1\,\,\,\mbox{and}\,\,\,R_{j}^{-}(\lambda)\geq1\right\} ,
\]
and if $S_{n}^{j}$ is non-invertible operator then 
\[
\sigma(S_{n}^{j}+D^{j})=\left\{ \lambda\in\mathbb{C}\,:\,R_{j}^{+}(\lambda)\geq1\right\} .
\]
\end{rem}
\bibliographystyle{abbrv}

\end{document}